\def\R{\mathbb{R}}
\def\C{\mathbb{C}}
\def\N{\mathbb{N}}
\def\P{\mathbb{P}}
\def\O{\mathcal{O}}
\def\X{\mathfrak{X}}
\def\Y{\mathfrak{Y}}
\def\ZZ{\mathfrak{Z}}
\def\B{\mathfrak{B}}
\def\L{\mathfrak{L}}
\def\U{\mathscr{U}}
\def\M{\mathscr{M}}
\def\V{\mathscr{V}}
\def\f{\mathfrak{f}}
\def\CC{\mathscr{C}}
\newtheorem{thm}{Theorem}[section]
\newtheorem{rem}[thm]{Remark}
\newtheorem{cor}[thm]{Corollary}
\newtheorem{prop}[thm]{Proposition}
\title{Gap for geometric canonical height functions}
\author{YUGANG ZHANG}
\date{\today}
\begin{document}
\maketitle

\begin{abstract}
We prove the existence of a gap around zero for canonical height functions associated with endomorphisms of projective spaces defined over complex function fields. We also prove that if the rational points of height zero are Zariski dense, then the endomorphism is birationally isotrivial. As a corollary, by a result of S. Cantat and J. Xie, we have a geometric Northcott property on projective plane in the same spirit of results of R. Benedetto, M. Baker and L. Demarco  on the projective line.

\end{abstract}

\section{Introduction}
Let $K:=\C(\B)$ be the function field of a smooth complex projective variety $\B $ of dimension $b\geq 1$. Let $f$ be an endomorphism of $\P^k_K$ defined over $K$ and of degree $d \geq 2$, i.e. such that $f^*\O(1) \cong \O(d)$. We can view it as a holomorphic family of endomorphism $f_t$ of $\P^k_\C,$ parameterized by $t$ in a Zariski open subset $\Lambda$ of $\B$. In this article, we study the dynamics of such an endomorphism defined over a function field, i.e. the behavior of orbits $\{f^m(x) \}_{m\in \N}$ for $K-$rational points $x\in \P^k_K$.

We say that $f$ is {\em isotrivial} (resp. {\em birationally isotrivial}) if for any two parameters $t,t^{'}\in \Lambda$, the endomophisms $f_t,f_{t^{'}} $ of $\P^k_\C$ are conjugated by an automorphism (resp. a birational self map) of $\P^k_\C$. These maps are somehow the trivial ones, their dynamics do not change with the parameter.

The main basic tool to study the dynamics of the map $f$ is its canonical height $\hat{h}_f: \P^k(\overline{K}) \to \R_+$. We refer to Section \ref{section height function} for the precise definition of the canonical height function. Roughly speaking, it measures how complicated the orbit of a point $x\in \P^k_K(\overline{K})$ is under $f$. For example, preperiodic points of $f$ will be of height zero (but the converse may not be true, see \cite{Baker09}). We say that a geometric point $x \in \P^k_K(\overline{K})$ is {\em stable} if its canonical height is zero. 

The following is our main theorem.

\begin{thm}
\label{theorem}
    Let $f: \P^k_K \to \P^k_K$ be an endomorphism of degree $d\geq 2$ defined over the field $K$ of rational functions of a smooth complex projective variety. 
    \begin{enumerate}
        \item There exists a positive real number $\varepsilon > 0$ (depending on $f$ and $K$) such that, for a $K$-point $x\in \P^k_K$, $\hat{h}_f(x)< \varepsilon$ implies $\hat{h}_f(x)=0.$ 
        \item If the stable $K-$points are Zariski dense, then $f$ is birationally isotrivial.
    \end{enumerate}
\end{thm}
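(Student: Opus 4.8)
The idea is to recover $\hat{h}_f$ from the local structure of the canonical adelic metric on $\O(1)$ over $K=\C(\B)$: from the dynamical Green currents of the members $f_t$ of the family (the ``archimedean'' data, governing the behaviour over $\Lambda$) together with the Berkovich dynamics of $f$ over the complete local fields $K_v=\C((\pi_v))$ attached to the prime divisors $v$ of $\B$ (the ``non-archimedean'' data, concentrated at the finitely many bad reduction divisors of $f$). Fix an ample class $\omega$ on $\B$ and let $C=H_1\cap\dots\cap H_{b-1}$ be a general smooth complete intersection of members of $|m\omega|$, with $K'=\C(C)$; then $\hat{h}_{f|_C}(x|_C)=m^{b-1}\,\hat{h}_f(x)$ for $K$-points $x$ whose section meets $C$ generically, and the gap constant over $K'$ varies in a bounded way with $C$. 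This reduces part (1) of Theorem~\ref{theorem} to the case $b=1$; for part (2) I would keep $\B$ general, since transporting a \emph{birational} conjugacy back from a curve needs a separate spreading-out step. Over a curve one has the local decomposition
\[
\hat{h}_f(x)=\sum_{v\in C}\hat\lambda_{f,v}(x),
\]
a finite sum of Call--Silverman local canonical heights, with $\hat\lambda_{f,v}(x)=g_{f,v}(\sigma_x(v))$ the value at the type-$1$ point of $(\P^k)^{\mathrm{an}}_{K_v}$ cut out by the section $\sigma_x$ of the dynamical Green function $g_{f,v}$ relative to a fixed hyperplane avoiding the orbit.

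For part (1), the birationally isotrivial case is clean: if $h\in\mathrm{Bir}(\P^k_K)$ (possibly after a harmless finite base change) conjugates $f$ to the base change $g$ of a fixed endomorphism of $\P^k_\C$, then for a $K$-point $x$ the rational map $h\circ\sigma_x\colon C\dashrightarrow\P^k$ has some degree $\delta\in\Z_{\ge0}$ while $g^n\circ(h\circ\sigma_x)$ has degree exactly $d^n\delta$, so $\hat{h}_f(x)=\lim_n d^{-n}\deg(h^{-1}\circ g^n\circ h\circ\sigma_x)$ is squeezed between $\delta/\deg(h)$ and $\delta\deg(h^{-1})$; hence $\hat{h}_f(x)$ is $0$ (when $\delta=0$) or at least $1/\deg(h)$, and $\varepsilon=1/\deg(h)$ works (when $f$ is genuinely isotrivial one even gets $\hat{h}_f(x)\in\Z_{\ge0}$). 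It then remains to treat $f$ that is \emph{not} birationally isotrivial, where one needs the local analysis. Writing $a_n:=\deg(f^n\circ\sigma_x)\in\Z_{\ge0}$ and $b_n:=a_{n+1}-d\,a_n\in\Z$, functoriality of Weil heights gives $|b_n|\le C(f)$ and $\hat{h}_f(x)=a_0+\sum_{n\ge0}b_n d^{-n-1}$; it would suffice to show that $(b_n)$ is eventually periodic with preperiod and period bounded solely in terms of $f$, equivalently that each $\hat\lambda_{f,v}(x)$ has denominators dividing a fixed integer. At the bad divisors $v\in S_f$ (which depend only on $f$) the functions $g_{f,v}$ are continuous and piecewise $\Q$-affine on the Berkovich space with slopes of bounded denominator, forcing $\hat\lambda_{f,v}(x)\in\tfrac1{N_v}\Z$; at the remaining places the contributions created by bad reduction of the \emph{section} $\sigma_x$ are controlled through its integrality and the product formula, which prevents them from cancelling an already small total. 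The main obstacle is the uniformity in $x$: ruling out arbitrarily long transients before the Berkovich orbit of $\sigma_x(v)$ settles into a periodic reduction type, which I would attack by a compactness argument on the finite set of reduction types together with the self-similarity $f_v^{*}g_{f,v}=d\,g_{f,v}$.

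For part (2), assume the stable $K$-points are Zariski dense in $\P^k_K$. That $\hat{h}_f(x)=0$ means the section $\sigma_x$ is stable in the strong sense that it neither bifurcates over $\Lambda$ nor acquires any boundary mass across $\B\setminus\Lambda$; in current-theoretic terms, $\sigma_x^{*}\hat{T}=0$, where $\hat{T}$ is the closed positive $(1,1)$-current on $\P^k\times\B$ restricting fibrewise to the dynamical Green current of $f_t$ and carrying its canonical extension across the bad fibres. A Zariski-dense family of sections annihilating $\hat{T}$ forces the class of $\hat{T}$ to be as small as possible, i.e. it forces the boundary (arithmetic) positivity of $\hat{T}$ over $\B\setminus\Lambda$ to vanish; equivalently, the essential minimum of the canonical adelic metric on $\O(1)$ is $0$, and then the arithmetic Hilbert--Samuel and essential-minimum inequalities force its arithmetic self-intersection $\widehat{\mathrm{vol}}$ to vanish. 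It remains to convert this numerical vanishing into a geometric trivialisation of the family: I would do so by propagating invariance, assembling the equilibrium measures $\mu_{f_t}=\hat{T}_{f_t}^{\wedge k}$ into a single closed positive current on $\P^k\times\B$ that is simultaneously $f$-invariant and constant along $\B$, extracting from its support --- the varying Julia sets --- a $\B$-family of birational maps conjugating the $f_t$ to one another, and finally checking (after at worst a finite base change, which is harmless for heights) that the resulting conjugacy is defined over $K$. The passage from measure-theoretic rigidity to an honest birational conjugacy is the crux of this part, and is where the $\P^k$-geometry, as opposed to the $\P^1$ theory of Benedetto, Baker and DeMarco, genuinely enters.
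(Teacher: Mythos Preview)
Your approach diverges entirely from the paper's, and for part (1) it has real gaps. The paper's argument for the gap is purely algebraic and very short: one considers the Chow variety $\mathscr{C}_{c'_u}(\X,\L)$ of $b$-cycles with bounded $\L$-degree, and for each $k$ the closed subvariety $S_k$ of cycles whose first $k$ iterates under $f$ also lie in $\mathscr{C}_{c'_u}(\X,\L)$. The chain $S_0\supset S_1\supset\cdots$ stabilises at some $N$ by Noetherianity. Combined with $|\hat{h}_f(x)-\sigma_x\cdot c_1(\L)\cdot c_1(\pi^*\M)^{b-1}|<c_u$, one gets that if $\hat{h}_f(x)<c_u/d^N$ then the first $N$ iterates of $\sigma_x$ have bounded degree, hence all do, hence $\hat{h}_f(x)=0$. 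No local heights, no Berkovich theory, no reduction to curves.

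Your route via local canonical heights is problematic at several points. First, the reduction to $b=1$ is not free: you assert that ``the gap constant over $K'$ varies in a bounded way with $C$'', but this is exactly the kind of uniformity that needs proof. Second, and more seriously, the claim that $g_{f,v}$ is piecewise $\Q$-affine with bounded denominators on $(\P^k)^{\mathrm{an}}_{K_v}$ is a $\P^1$ phenomenon tied to the tree structure of the Berkovich line; for $k\ge 2$ the Berkovich analytification is not a polyhedral complex and there is no such structure theorem for the dynamical Green function. So the bounded-denominator conclusion $\hat\lambda_{f,v}(x)\in\frac{1}{N_v}\Z$ is unsupported in the generality you need. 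Third, you yourself flag the uniformity-in-$x$ of the transient length as ``the main obstacle'' and only gesture at a compactness argument; this is precisely the content of the theorem. Your side argument for the birationally isotrivial case is also shaky: for a genuinely birational (not biregular) $h$, the inequality $\deg(h^{-1}\circ g^n\circ h\circ\sigma_x)\ge d^n\delta/\deg(h)$ does not follow from degree submultiplicativity in the direction you need.

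For part (2), what you sketch is essentially the bifurcation-current\,/\,pluripotential-theoretic approach of Gauthier--Vigny, which the paper explicitly sets out to bypass. The paper instead takes an irreducible component $Z$ of maximal dimension of the parameter space of stable sections (inside $\mathrm{Rat}(\B,\X)$), looks at its graph $\Gamma(Z)\subset\X\times Z$, and shows that the first projection $\pi_1:\Gamma(Z)\to\X$ is birational. The dynamical input is elementary and local: through any repelling periodic point $p$ of a regular fibre $f_{t_0}$ there can pass at most one stable section, because a limit of the bounded-degree cycles $f^n(\sigma_x)$ cannot acquire a vertical component over $\Lambda$, forcing $\sigma_x$ to coincide with the analytic continuation of $p$. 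Since repelling periodic points are Zariski dense in each fibre, $\pi_1$ is generically injective, hence birational, and then $\Psi_t:=\pi_2\circ\pi_1^{-1}|_{\X_t}$ gives the birational conjugacies $\Psi_t^{-1}\circ\Psi_{t'}$ between $f_t$ and $f_{t'}$. This avoids equilibrium measures, arithmetic Hilbert--Samuel, and the ``measure-theoretic rigidity $\Rightarrow$ birational conjugacy'' step that you correctly identify as the crux of your approach but do not carry out.
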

\begin{rem}\normalfont
    The results hold true for polarized endomorphisms, which are defined as follows. Let $X$ be a smooth projective variety defined over $K$, a polarized endomorphism of $X$ of degree $d\geq 2$ defined over $K$ is an endomorphism of $X$ defined over $K$ such that there exists an ample line bundle $L$ defined over $K$ such that $f^*L \cong dL$ (We use additive notation for line bundles). For the first point of Theorem \ref{theorem}, we can even ask $X$ to be only normal since the arguments are purely algebraic.
\end{rem}
We say that there is a {\em gap} of the canonical height function around zero. Note that the point (2) is already known by a result of T. Gauthier and G. Vigny (\cite{GV19Pn}), but we propse here a simpler proof which does not rely on complex pluripotential theory. In particular, we do not use bifurcation currents in the course of our proof.

We now focus on the case of the projective plane $\P^2_K$. By a result of S. Cantat and J. Xie (\cite{cantat2020birational}), in this case, birational isotriviality of $f$ is equivalent to isotriviality of $f$. Given $C$ a singular curve of genus zero, and $g$ an endomorphism of $C$, we say that $g$ is {\em isotrivial} if the normalization of $g$ is so. If $C$ is of genus one, then we say that $C$ is {\em isotrivial} if there exists a base change $\B' \to \B$ and a complex elliptic curve $C'$ such that the normalization of $C_{K'}:= C\times_K K'$ is isomorphic to $C'\times_\C K'$, where $K'=\C(\B')$. If $E$ is a (possibly reducible) proper subvariety of $\P^2_K$, we say that it is {\em preperiodicly isotrivial} if every (irreducible) curve $C$ of $E$ has an iteration $f^m(C)$ which is periodic of period $l$ by $f$ and $f^l: f^m(C) \to f^m(C)$ is isotrivial. Moreover, if there is a zero dimensional component $x$ of $E$, then by definition, an iteration by $f$ of $x$ should belong to a curve of $E$.

We have the following corollary in dimension two.

\begin{cor}
\label{corollary}
Let $f: \P^2_K \to \P^2_K$ be an endomorphism of degree $d\geq 2$ defined over $K$ and suppose that $f$ is not isotrivial. Then there exists $\varepsilon>0$ and a preperiodicly isotrivial proper subvariety $E$ (which can be empty or reducible) so that the set
$$\{x\in \P_K^2\setminus E(K) \ |\ \hat{h}_f(x) < \varepsilon \}$$
is finite. In particular, there are only finitely many preperiodic $K$-points in $X\setminus E$.
\end{cor}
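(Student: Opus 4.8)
The plan is to deduce the corollary from Theorem~\ref{theorem} combined with the Cantat--Xie dichotomy and an induction on dimension. First I would apply part (1) of Theorem~\ref{theorem} to obtain the $\varepsilon > 0$ so that $\hat h_f(x) < \varepsilon$ forces $\hat h_f(x) = 0$ for $K$-points $x$; thus the set in question is precisely the set of \emph{stable} $K$-points outside $E$, and it suffices to build a preperiodically isotrivial proper subvariety $E$ such that only finitely many stable $K$-points lie outside it. Let $Z \subset \P^2_K$ be the Zariski closure of the set of all stable $K$-points. If $Z = \P^2_K$, then the stable points are Zariski dense, so by part (2) of Theorem~\ref{theorem} $f$ is birationally isotrivial, hence (by Cantat--Xie) isotrivial, contradicting our hypothesis. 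So $Z$ is a proper subvariety.

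Next I would analyze the $1$-dimensional part of $Z$. The key point is that $\hat h_f$ is functorial: if $C$ is an irreducible curve carrying infinitely many stable $K$-points, then (since there are only finitely many $f$-orbits of curves meeting a fixed curve, by a standard argument using that curves have bounded degree under iteration, or more directly since $\hat h_f$ descends along the dynamics) some iterate $C' = f^m(C)$ is periodic of some period $l$, and $C'$ still carries infinitely many stable points. On the periodic curve $C'$, the restriction $g := f^l|_{C'}$ is a polarized endomorphism of $C'$ of degree $d^l$, and stable points of $f$ on $C'$ are exactly stable points of $g$ (canonical heights restrict compatibly, up to the scaling by $l$). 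Since $C'$ carries infinitely many $g$-stable $K$-points, these are Zariski dense in $C'$, so $g$ has Zariski-dense stable points; by the curve case of Theorem~\ref{theorem}(2) — equivalently, by the classical Northcott-type results on $\P^1$ and elliptic curves, which is exactly the dichotomy feeding into the corollary — the normalization of $g$ must be isotrivial. Hence every such curve $C'$ is isotrivial in the sense defined in the introduction, and pulling back, $C$ itself becomes preperiodically isotrivial. Collecting these finitely many $f$-orbits of curves gives the $1$-dimensional skeleton of $E$.

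Then I would handle the $0$-dimensional residue: let $E_1$ be the union of the curves found above, and consider stable $K$-points not lying on any of them. These form a set whose Zariski closure $Z_0$ is either empty, finite, or positive-dimensional; but $Z_0$ cannot contain a curve not already in $E_1$ (else that curve carries infinitely many stable points and should have been included), so $Z_0$ is finite. I would add to $E$ those finitely many points whose forward $f$-orbit eventually lands in $E_1$ (per the definition of preperiodically isotrivial, a $0$-dimensional component is allowed precisely when an iterate lands on a curve of $E$) and discard the rest into the "finite exceptional set"; this shows $\{x \notin E(K) : \hat h_f(x) < \varepsilon\}$ is finite. The final sentence follows since preperiodic $K$-points have height zero, hence lie in the set we have just shown to be finite outside $E$.

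**Main obstacle.** The part I expect to require the most care is the reduction from "a curve $C$ with infinitely many stable points" to "some periodic iterate $f^m(C)$ with $f^l$ isotrivial on it." One must control that curves in the forward orbit of $C$ do not proliferate in degree (so that periodicity can be forced) and must verify the compatibility of canonical heights under restriction to and normalization of a periodic curve — in particular that a polarized endomorphism of a genus-zero or genus-one curve with dense stable points is isotrivial, matching the introduction's definitions of isotriviality for singular curves and for elliptic curves over $K'$ after a base change. Making the orbit-of-curves argument and this restriction-of-heights bookkeeping precise, and checking that the notion of "preperiodically isotrivial" is exactly what the construction produces, is where the real work lies; the rest is formal manipulation of Theorem~\ref{theorem} and the Cantat--Xie equivalence.
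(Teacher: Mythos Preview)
Your outline matches the paper's proof: part~(1) reduces to stable $K$-points, part~(2) together with Cantat--Xie forces their locus to be a proper subvariety, whose curve components are then shown to be preperiodically isotrivial and whose zero-dimensional residue is handled as you describe. The paper runs this through the parameter space $Z$ built in the proof of Theorem~\ref{theorem} (so ``$\dim Z\le 1$'' in place of ``Zariski closure is proper'') rather than working directly in $\P^2_K$, but the content is the same; in particular your ``main obstacle'' dissolves, since the curves carrying infinitely many stable $K$-points are the finitely many one-dimensional components of a proper closed set and this finite collection is $f$-forward-invariant (images of stable points are stable and $f$ has finite fibres), so every such curve is preperiodic with no degree control needed.
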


We now give some motivations and historical notes for this work, see also the introduction of \cite{GV19Pn} and the references therein. 
In dimension one, R. Benedetto \cite{Benedetto05} proved that if $f$ is a non-isotrivial polynomial over the function field $K$ of transcendence degree one and of any characrestic, then there are only finitely many preperiodic $K$-points, together with the gap property of the canonical height function. Then M. Baker \cite{Baker09} generalized it to all non-isotrivial rational functions and all function fields with product formulas. L. Demarco \cite{Demarco16} improved the condition to non-isotrivial over $K$ using arguments from complex dynamics, where $K$ is a complex function field of transcendence degree one (see also related works by R. Dujardin and C. Favre \cite{DujardinFavre08} and C. McMullen \cite{McMullen87}). 

In higher dimension, T. Gauthier and G. Vigny \cite{GV19Pn} proved geometric Northcott properties of canonical height zero (i.e. without gap) for all polarized endomorphisms of projective varieties. They also interpret the canonical height function in terms of a bifurcation current. Note that Z. Chatzidakis and E. Hrushovski \cite{chatzidakis_hrushovski_2008} proved a model-theoretic version of Corollary \ref{corollary} for $\P^k$.

\medskip

\textbf{Acknowledgements}
I would like to thank my advisor T. Gauthier for numerous helpful discussions and all the time he spent with me. I would like to thank B. Dang, C. Favre and G. Vigny for useful discussions. I also thank S. Cantat and the anonymous referee for their useful comments and suggestions.

\section{Canonical height function}
\label{section height function}

In this section we recall the construction of the canonical height function $\hat{h}_f$ associated with $f.$ Canonical height functions were introduced by G. Call and J. Silverman \cite{CallSilverman93} in the global height fields case. The interpretation by means of intersection numbers can be found for example in \cite{Gubler2006TheBC}, see also \cite{CGHX}. We use the additive notation for tensor product of line bundles.

Let us first construct by induction a sequence of models ${(\pi_n:\X_n\to \B,\f_n,\L_n)}_{n\in \N}$ as follows. 
Let $\X=\X_0 = \mathbb{P}^n_{\C} \times \B,$ with $\pi=\pi_0: \X \to \B$ the second projection. Fix an ample line bundle $\M$ of $\B$ such that $\L=\L_0:=\O_{\mathbb{P}^n_{\C}}(1)+\pi^{*}(\M)$ is an ample line bundle on $\X$. There exists an open subset $\Lambda$ of $\B$ such that $f$ extends to a fibered endomorphism $\f$ on $\X_{\Lambda}:=\mathbb{P}^n_{\C} \times \Lambda$ and $\f^{*}\L|_{\Lambda}\cong d\L|_{\Lambda}.$ We call $\X_\Lambda$ the regular part of the model $\X$. For any parameter $t\in \Lambda$, the fiber $\X_t:=\pi^{-1}(t)$ will be called a regular fiber.

Suppose now $(\pi_n:\X_n\to \B,\f_n,\L_n)$ constructed. Let $\X_{n+1}$ be the normalization of the Zariski closure of the graph of $(\mathrm{id},\f_n): \X_{n,\Lambda}\to \X_n \times \X_n$ preserving $\X_{\Lambda}$. Let $\phi_{n+1}:\X_{n+1} \to \X_n$ and $g_{n+1}: \X_{n+1} \to \X_n$ be the first and the second projection. Let $\pi_{n+1}:=\pi_{n}\circ\phi_{n+1}, \f_{n+1}:=\phi_{n+1,\Lambda}^{-1} \circ \f_n \circ g_{n+1,\Lambda}$ and $\L_{n+1}:=\frac{1}{d}g_{n+1}^{*}\L_n, $ as the following diagram shows.
\[
\begin{tikzcd}[]
\mathfrak{X}_{n+1} \arrow[r, "\mathfrak{f}_{n+1}", dashed] \arrow[d, "g_{n+1}"'] & \mathfrak{X}_{n+1} \arrow[d, "\phi_{n+1}"] \\
\mathfrak{X}_n \arrow[r, "\mathfrak{f}_n"', dashed]                              & \mathfrak{X}_n                            
\end{tikzcd}
\]
We next construct the canonical height function associated with this sequence of models. Let $x\in X(\overline{K})$ be a point. Denote by $k(x) $ its residue field and $\sigma_{x,n}$ its Zariski closure in the model $\X_n.$ Define the canonical height function associated with $f$ as the following limit of intersection number.
$$\hat{h}_{f}(x):=\frac{1}{[k(x):K]} \lim_{n}  \sigma_{x,n} \cdot c_1(\L_n) \cdot c_1(\pi_n^{*}\M)^{b-1}.$$
It is well-known that the limit exists, see Proposition \ref{higehtwelldefined} below. We recall the proof since we will need it and also for the convenience of the reader. For more properties, see for example Theorem 11.18 of \cite{Gubler2003ASNSP} and Theorem 3.6 of \cite{Gubler2006TheBC}.
\begin{prop}
\label{higehtwelldefined}
The canonical height function is well defined, i.e. the limit $\lim_{n}  \sigma_{x,n} \cdot c_1(\L_n)\cdot c_1(\pi^{*}\M)^{b-1}$ exists. 

It satisfies the following two properties:
\begin{itemize}
    \item $\hat{h}_{f}(x) = \sigma_{x} \cdot c_1(\L)\cdot c_1(\pi^{*}\M)^{b-1} + O(1);$
    \item $\hat{h}_{f}(f(x))=d\hat{h}_{\f}(x).$
\end{itemize}
The bound $O(1)$ is independent of $x$.
\end{prop}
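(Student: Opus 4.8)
The plan is to reduce the proposition to the classical Call--Silverman telescoping argument applied to the naive height on the single model $\X_0$. For $y\in\P^k(\overline K)$ write $\sigma_{y,0}$ for the Zariski closure of $y$ in $\X_0$ and set $h_0(y):=\tfrac{1}{[k(y):K]}\,\sigma_{y,0}\cdot c_1(\L_0)\cdot c_1(\pi_0^{*}\M)^{b-1}$. The first thing to check is a compatibility between the models: since $\X_{n+1}$ is reduced and irreducible with $\X_{n+1,\Lambda}$ dense and $\B$ is separated, any two morphisms $\X_{n+1}\to\B$ agreeing over $\Lambda$ agree, and applied to $\pi_n\circ g_{n+1}$ and $\pi_{n+1}=\pi_n\circ\phi_{n+1}$ (which both restrict to $\pi_0$ over $\Lambda$, $\f$ being fibred) this gives $\pi_n\circ g_{n+1}=\pi_{n+1}$. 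Iterating this together with $\L_n=\tfrac1d g_n^{*}\L_{n-1}$ yields
\[
\L_n=\tfrac1{d^{n}}\,G_n^{*}\L_0,\qquad \pi_n^{*}\M=G_n^{*}\pi_0^{*}\M,\qquad G_n:=g_1\circ\cdots\circ g_n\colon\X_n\to\X_0 .
\]
Since $G_n$ restricts to $f^{n}$ on the generic fibre, it maps $\sigma_{x,n}$ onto $\sigma_{f^{n}(x),0}$ with mapping degree $[k(x):K]/[k(f^{n}(x)):K]$; combining this with the projection formula gives, for all $n$,
\[
\frac{1}{[k(x):K]}\,\sigma_{x,n}\cdot c_1(\L_n)\cdot c_1(\pi_n^{*}\M)^{b-1}=\frac{h_0(f^{n}(x))}{d^{n}} .
\]
It therefore suffices to show that $d^{-n}h_0(f^{n}(x))$ converges and to identify the two properties of its limit.

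The heart of the matter is the estimate: there is a constant $C\geq 0$, independent of $y$, with $\lvert h_0(f(y))-d\,h_0(y)\rvert\leq C$ for all $y\in\P^k(\overline K)$. On $\X_1$ the morphism $\phi_1$ is an isomorphism over $\Lambda$ under which $g_1$ becomes $\f$; as $\f^{*}\L_0\cong d\L_0$ over $\Lambda$ and $\pi_1=\pi_0\circ\phi_1=\pi_0\circ g_1$, the $\mathbb Q$-divisor $D_1:=\L_1-\phi_1^{*}\L_0=\tfrac1d g_1^{*}\L_0-\phi_1^{*}\L_0$ is supported over $\B\setminus\Lambda$. Pushing $\sigma_{y,1}$ forward by $\phi_1$ (degree $1$, matching $y$ to $y$) and by $g_1$ (degree $[k(y):K]/[k(f(y)):K]$), and rerunning the projection-formula computation, one finds
\[
h_0(f(y))=d\,h_0(y)+\frac{d}{[k(y):K]}\;\sigma_{y,1}\cdot c_1(D_1)\cdot c_1(\pi_1^{*}\M)^{b-1},
\]
so the estimate is equivalent to the uniform boundedness of the height attached to the vertical $\mathbb Q$-divisor $D_1$. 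This is the step I expect to require the real work; it is nonetheless standard. One proof: slice $\B$ by a general complete intersection curve $\Gamma$ of members of $\lvert m\M\rvert$ (when $b=1$, $\B\setminus\Lambda$ is already finite and one argues directly), so that $\Gamma$ avoids the part of $\B\setminus\Lambda$ of codimension $\geq 2$ and meets the rest in a fixed number of points; after base change to $\Gamma$ the intersection number above becomes, up to a fixed positive factor, $\overline{\{y_\Gamma\}}\cdot (D_1)_\Gamma$ on $\X_1\times_\B\Gamma$, a variety fibred over the curve $\Gamma$, where $\overline{\{y_\Gamma\}}$ is horizontal of degree $[k(y):K]$ over $\Gamma$ and $(D_1)_\Gamma$ is fibral, supported on finitely many fibres $F$. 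For a large constant $C'$ the divisor $C'F\pm (D_1)_\Gamma|_F$ is effective and fibral, hence has non-negative intersection with the horizontal curve $\overline{\{y_\Gamma\}}$; therefore $\lvert\overline{\{y_\Gamma\}}\cdot (D_1)_\Gamma\rvert\leq C'\,\overline{\{y_\Gamma\}}\cdot F=C'[k(y):K]$ per fibre, and division by $[k(y):K]$ gives the bound.

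Granting the estimate, Tate's telescoping finishes the proof. The sequence $d^{-n}h_0(f^{n}(x))$ is Cauchy, since $\bigl\lvert d^{-(n+1)}h_0(f^{n+1}(x))-d^{-n}h_0(f^{n}(x))\bigr\rvert=d^{-(n+1)}\lvert h_0(f(f^{n}(x)))-d\,h_0(f^{n}(x))\rvert\leq C\,d^{-(n+1)}$, so the limit $\hat h_f(x):=\lim_n d^{-n}h_0(f^{n}(x))$ exists; by the first paragraph it equals $\lim_n\tfrac1{[k(x):K]}\sigma_{x,n}\cdot c_1(\L_n)\cdot c_1(\pi_n^{*}\M)^{b-1}$, proving well-definedness. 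Summing the telescoped tail gives $\lvert\hat h_f(x)-h_0(x)\rvert\leq C/(d-1)$, independent of $x$, which is the first listed property (normalised as in the definition of $\hat h_f$); and $\hat h_f(f(x))=\lim_n d^{-n}h_0(f^{n+1}(x))=d\lim_n d^{-(n+1)}h_0(f^{n+1}(x))=d\,\hat h_f(x)$ is the functional equation. The only non-formal ingredient is the vertical-divisor bound; the rest is bookkeeping with the projection formula and the identities between the models.
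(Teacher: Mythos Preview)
Your proof is correct and rests on the same core idea as the paper's: the $\mathbb Q$-divisor $\phi_1^{*}\L_0-\L_1$ on $\X_1$ is vertical (trivial on the generic fibre), and its height contribution is uniformly bounded, whence the telescoping sum converges geometrically. Two differences in execution are worth flagging. First, your reduction $\tfrac{1}{[k(x):K]}\sigma_{x,n}\cdot c_1(\L_n)\cdot c_1(\pi_n^{*}\M)^{b-1}=d^{-n}h_0(f^n(x))$ via the projection formula along $G_n$ is a clean way to make the Call--Silverman structure explicit; the paper instead computes consecutive differences directly on the tower of models, using the identity $\phi_{n+1}^{*}\L_n-\L_{n+1}=d^{-n}g_{n+1}^{*}\cdots g_2^{*}(\phi_1^{*}\L_0-\L_1)$. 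Second, your bound on the vertical contribution via slicing by a general complete-intersection curve $\Gamma\subset\B$ is more elaborate than necessary: the paper simply observes that a vertical divisor $\V$ on $\X_1$ satisfies $-\pi_1^{*}\mathscr N\leq\V\leq\pi_1^{*}\mathscr N$ for some effective divisor $\mathscr N$ on $\B$, and then the projection formula gives $|\sigma_{y,1}\cdot c_1(\O(\V))\cdot c_1(\pi_1^{*}\M)^{b-1}|\leq[k(y):K]\,\mathscr N\cdot\M^{b-1}$ directly, with no need to pass to a curve base. Your curve-slicing reproduces this bound after base change, but the sandwiching already works on $\X_1$ over the full base $\B$.
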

\begin{proof}
For all $n\in \N^{*}$, we have
\begin{align*}
    \phi_{n+1}^* \L_n - \L_{n+1} &=\phi_{n+1}^*(\frac{1}{d}{g_{n}}^{*} \L_{n-1}) - \frac{1}{d}{g_{n+1}}^{*} \L_{n}\\
    &=\frac{1}{d}g_{n+1}^{*}(\phi_n^*\L_{n-1} - \L_n )\\
    &=\frac{1}{d^{n}} g_{n+1}^{*}\cdots g_2^{*}(\phi_1^*\L_0 - \L_1),
\end{align*}

Since ${\phi_1^{*}(\L_0)}_{\eta} \cong \L_{1,\eta},$ where $\eta$ is the generic point of $\B$, there exists a vertical divisor $\V$ on $\X_1$ such that $\phi_1^{*}(\L_0) - \L_1 \cong \O_{\X_1}(\mathscr{V})$. 
Hence there exists a effective divisor $\mathscr{N}$ on $\B$ such that $-\pi_1^{*}(\mathscr{N}) < \mathscr{V} < \pi_1^{*}(\mathscr{N}) $.

The following computation implies that we have a geometric series, whence the convergence of the intersection numbers and the first property:
\begin{align*}
    |\sigma_{x,n}\cdot c_1(\L_n) \cdot c_1(\pi_n^{*}\M)^{b-1} &- \sigma_{x,n+1}\cdot c_1(\L_{n+1}) \cdot c_1(\pi_{n+1}^{*}\M)^{b-1}|\\
    &= |\sigma_{x,n+1}\cdot c_1(\phi_{n+1}^* \L_n - \L_{n+1})\cdot c_1(\pi_{n+1}^{*}\M)^{b-1}|\\
    &= |\frac{1}{d^n}\sigma_{x,n+1} \cdot c_1(g_{n+1}^{*}\cdots g_2^{*}(\phi_1^*\L_0 - \L_1)) \cdot c_1(\pi_{n+1}^{*}\M)^{b-1}|\\
    &< |\frac{1}{d^n}\sigma_{x,n+1} \cdot c_1( \pi_{n+1}^{*}\O_\B(\mathscr{N}))\cdot c_1(\pi_{n+1}^{*}\M)^{b-1}|\\
    &= \frac{[k(x):K]}{d^n} \mathscr{N} \cdot \M^{b-1}.
\end{align*}
Note that the bound at the end divided by $[k(x):K]$ is independent of $x$. The second property comes from the projection formula: $g_{n+1}(\sigma_{x,n+1})\cdot c_1(\L_n) = \sigma_{x,n+1} \cdot c_1(g_{n+1}^{*}(\L_n))$.
\end{proof}
\begin{rem}\normalfont
\begin{enumerate}
    \item The two properties uniquely determine the canonical height function.
    \item The canonical height function does not depend on the chosen sequence of models. It depends on the polarization $(\B,\M)$ but the vanishing of $\hat{h}_f$ is independent of $(\B,\M)$.
\end{enumerate}
\end{rem}

The next proposition gives a characterization of the points of canonical height zero. The notation $f^n(\sigma_x)$ means the Zariski closure of the point $f^n(x)\in X(\overline{K})$ in the model $\X.$
\begin{prop}
\label{c_u}
    The canonical height function is non-negative, and the following assertions are equivalent:
    \begin{enumerate}
        \item $\hat{h}_f(x)=0$;
        \item There exists a positive constant $c_u$ such that for all $x\in X(\overline{K})$ and $n\in \N,$
        $$\frac{1}{[k(x):K]}f^n(\sigma_x) \cdot c_1(\L) \cdot c_1(\pi^{*}\M)^{b-1}\leq c_u;$$
        \item There exists a positive constant $c'_u$ such that for all $x\in X(\overline{K})$ and $n\in \N,$
        $$\frac{1}{[k(x):K]}f^n(\sigma_x) \cdot c_1(\L)^b \leq c'_u.$$
    \end{enumerate}
\end{prop}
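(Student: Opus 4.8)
The plan is to prove Proposition \ref{c_u} by first establishing non-negativity, then proving the chain of equivalences $(1)\Leftrightarrow(2)\Leftrightarrow(3)$.

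For non-negativity, I would use the second property of Proposition \ref{higehtwelldefined}, namely $\hat{h}_f(f^n(x)) = d^n \hat{h}_f(x)$, together with the first property $\hat{h}_f(y) = \sigma_y \cdot c_1(\L) \cdot c_1(\pi^*\M)^{b-1} + O(1)$ with a bound independent of $y$. Applying the first property to $y = f^n(x)$ and dividing by $d^n$, we get
\[
\hat{h}_f(x) = \frac{1}{d^n}\left(\frac{1}{[k(x):K]}f^n(\sigma_x)\cdot c_1(\L)\cdot c_1(\pi^*\M)^{b-1}\right) + \frac{O(1)}{d^n}.
\]
Since $\L$ is ample on $\X$ and $\pi^*\M$ is nef (it is the pullback of an ample class), the intersection number $f^n(\sigma_x)\cdot c_1(\L)\cdot c_1(\pi^*\M)^{b-1}$ is non-negative for every $n$ (the subvariety $f^n(\sigma_x)$ has dimension $b$, and we are intersecting with one ample and $b-1$ nef classes). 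Letting $n\to\infty$ forces $\hat{h}_f(x)\geq 0$.

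The same displayed identity is the engine for $(1)\Leftrightarrow(2)$. If $\hat{h}_f(x)=0$, the identity shows $\frac{1}{[k(x):K]}f^n(\sigma_x)\cdot c_1(\L)\cdot c_1(\pi^*\M)^{b-1} = -O(1)$, a quantity bounded uniformly in $n$ and $x$, which gives $(2)$ with a uniform $c_u$. Conversely, if $(2)$ holds, dividing by $d^n$ and sending $n\to\infty$ in the identity yields $\hat{h}_f(x)=0$. For $(2)\Leftrightarrow(3)$, I would compare the two intersection numbers $f^n(\sigma_x)\cdot c_1(\L)\cdot c_1(\pi^*\M)^{b-1}$ and $f^n(\sigma_x)\cdot c_1(\L)^b$. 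Writing $\L = \O_{\P^k}(1) + \pi^*\M$ and expanding $c_1(\L)^b$ by the binomial-type expansion in terms of $c_1(\O_{\P^k}(1))$ and $c_1(\pi^*\M)$: since $f^n(\sigma_x)$ surjects onto $\B$ (or is contained in a fiber, a case one handles separately or absorbs into constants) and $\O_{\P^k}(1)$ restricted to a fiber has bounded degree, while $c_1(\pi^*\M)^b = 0$ for dimension reasons, each term is controlled by a fixed multiple of the others; more precisely both quantities are comparable up to additive and multiplicative constants depending only on the model, so boundedness of one family is equivalent to boundedness of the other.

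The main obstacle I anticipate is making the comparison $(2)\Leftrightarrow(3)$ fully rigorous, in particular controlling the ``cross terms'' $f^n(\sigma_x)\cdot c_1(\O_{\P^k}(1))^i \cdot c_1(\pi^*\M)^{b-i}$ for intermediate $i$ uniformly in $n$. The point is that $c_1(\O_{\P^k}(1))$ is nef (not ample) on $\X$, so these cross terms are non-negative, and one should bound them above: the term with $i=1$ is essentially $d^n$ times a fixed fiberwise degree (since $f^n$ multiplies $\O_{\P^k}(1)$-degree on fibers by $d^n$, up to the vertical corrections encoded by the divisor $\V$), but after dividing by the normalization this still needs the uniform control of vertical contributions that was established in the proof of Proposition \ref{higehtwelldefined}. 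I would therefore lean on the geometric-series estimate from that proof to bound the discrepancy between $f^n(\sigma_x)\cdot c_1(\L_n)\cdot(\dots)$ on the tower and the fixed-model quantity, and then argue that all the relevant numerical quantities on the base model $\X$ differ from $f^n(\sigma_x)\cdot c_1(\L)^b$ by controlled amounts. Everything else is a formal manipulation of the two properties in Proposition \ref{higehtwelldefined}.
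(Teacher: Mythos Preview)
Your treatment of non-negativity and of the equivalence $(1)\Leftrightarrow(2)$ is correct and matches the paper's argument essentially verbatim: both follow directly from the two properties in Proposition~\ref{higehtwelldefined}. The direction $(3)\Rightarrow(2)$ is also easy, since $\L$ is ample and $\pi^*\M$ is nef, so $m\,c_1(\L)-c_1(\pi^*\M)$ is nef for some $m$ and one can replace the $\pi^*\M$ factors by $m\L$ one at a time.

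The genuine gap is in $(2)\Rightarrow(3)$. Your binomial expansion produces the mixed terms
\[
f^n(\sigma_x)\cdot c_1(\O_{\P^k}(1))^i\cdot c_1(\pi^*\M)^{b-i},\qquad 2\le i\le b,
\]
and you have no mechanism to bound these uniformly in $n$. The claim that ``$c_1(\pi^*\M)^b=0$ for dimension reasons'' is false here: $f^n(\sigma_x)$ maps finitely of degree $[k(x):K]$ onto $\B$, so $f^n(\sigma_x)\cdot c_1(\pi^*\M)^b=[k(x):K]\cdot c_1(\M)^b>0$. More importantly, the terms with $i\ge 2$ are genuinely higher-degree quantities in the ``fiber direction'' and are not controlled by the $i=1$ term via any elementary manipulation; the geometric-series estimate from Proposition~\ref{higehtwelldefined} only compares $\L_n$ on successive models and says nothing about $c_1(\L)^i$ versus $c_1(\L)\cdot c_1(\pi^*\M)^{i-1}$ on the fixed model $\X$.

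The paper closes this gap with a Khovanskii--Teissier type inequality (Theorem~3 of \cite{DangLondon}, see also \cite{Popovici15,Xiao15}): for nef classes on a $b$-cycle one has
\[
\sigma\cdot c_1(\L)^{b-j}\cdot c_1(\pi^*\M)^{j}
\;\le\; c\,\frac{\bigl(\sigma\cdot c_1(\L)^{b-j-1}\cdot c_1(\pi^*\M)^{j+1}\bigr)\bigl(\sigma\cdot c_1(\L)\cdot c_1(\pi^*\M)^{b-1}\bigr)}{\sigma\cdot c_1(\pi^*\M)^{b}},
\]
which, applied inductively with $\sigma=f^n(\sigma_x)$ and using that the denominator equals $[k(x):K]\deg(\M)$, yields $\sigma\cdot c_1(\L)^b\le (c')^{b-1}\,\sigma\cdot c_1(\L)\cdot c_1(\pi^*\M)^{b-1}$ with $c'$ depending only on $c_u$ and $\deg(\M)$. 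This log-concavity input is the missing idea in your proposal.
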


\begin{proof}
    The non-negativity is clear since the involved line bundles $\L_n$ and $\pi_{n}^{*}\M$ are nef.
    It is also clear that the uniform boundedness implies vanishing of the canonical height.
    
    Suppose now that $x\in X(\Bar{K})$ has canonical height zero. We can assume that $X\in X(K).$ By the preceding proposition there exists a uniform positive constant $c_u>0$ such that 
    $$|\hat{h}_f(x) - \sigma_x \cdot c_1(\L) \cdot c_1(\pi^{*}\M)^{b-1} | < c_u.$$ 
    Since $\hat{h}_f(f^n(x))= d^n \hat{h}_f (x)=0$, we have that 
    $$ f^n(\sigma_x) \cdot c_1(\L) \cdot c_1(\pi^{*}\M)^{b-1}< c_u,$$ 
    which shows the equivalence between (1) and (2).

    Since $\L$ is ample, (3) implies (2). It thus remains to show that (2) implies (3). We can assume that $x\in X(K).$ The case $b=1$ is trivial, thus we may suppose that $b\geq 2$. Using Theorem 3 of \cite{DangLondon} (see also \cite{Popovici15} and \cite{Xiao15}), there exists a constant $c>0$, depending only on $b,$ such that, for all integers $j=0,\cdots,b-2$
    \begin{align*}
        \sigma_x\cdot \mathrm{c}_1(\L)^{b-j} &\cdot \mathrm{c}_1(\pi^{*}(\M))^{j}\\
        & \leq c \frac{\sigma_x \cdot \mathrm{c}_1(\L)^{b-j-1} \cdot \mathrm{c}_1(\pi^{*}(\M))^{j+1}}{\sigma_x \cdot \mathrm{c}_1(\pi^{*}(\M))^{b}} \sigma_x \cdot  \mathrm{c}_1(\L) \cdot \mathrm{c}_1(\pi^{*}(\M))^{b-1}\\
         & \leq c' \sigma_x \cdot \mathrm{c}_1(\L)^{b-j-1} \cdot \mathrm{c}_1(\pi^{*}(\M))^{j+1},
    \end{align*}
where $c':= c \sigma_x \cdot  \mathrm{c}_1(\L) \cdot \mathrm{c}_1(\pi^{*}(\M))^{b-1} / \sigma_x \cdot \mathrm{c}_1(\pi^{*}(\M))^{b} \leq c c_u/\deg(\M)$.
    
Thus by induction, we get 
\begin{align*}
    \sigma_x\cdot \mathrm{c}_1(\L)^{b} \leq {(c')}^{b-1} \sigma_x \cdot \mathrm{c}_1(\L) \cdot \mathrm{c}_1(\pi^{*}(\M))^{b-1}.
\end{align*}
It suffices to set $c'_u:={(c')}^{b-1}$.
\end{proof}

\section{Preliminaries for some parameter spaces}
Remark that $K-$rational points of $X$ can be seen as rational sections of $\pi : \X \to \B$. There are natural schemes/complex spaces parameterizing these objects which will be used in the proof of our main theorem. In this section we define these parameter spaces and give some basic properties. References are also given.

Recall first the notion of the Hilbert scheme constructed by Grothendieck (\cite{GroHilbert}). Let $\X$ be a complex projective variety. The Hilbert scheme $\mathrm{Hilb}(\X)$ is a locally Noetherian complex scheme which represents the functor $\underline{\mathrm{Hilb}}_\X$ defined as follows: to a complex scheme $T$ we associate the set of closed subschemes of $\X\times T$ which is flat over $T$. By definition, there is a flat family of closed subschemes $p_\X: \Gamma(\mathrm{Hilb}(\X)) \subset \X \times \mathrm{Hilb}(\X) \to \mathrm{Hilb}(\X)$ such that the geometric fibers are in one-to-one correspondence with the closed subschemes of $\X.$ We call $\Gamma(\mathrm{Hilb}(\X))$ the graph of $\mathrm{Hilb}(\X)$. Denote by $\mathrm{Hilb}_n(\X)$ the open subset of $\mathrm{Hilb}(\X)$ consisting of subschemes of pure dimension $n.$ Given a surjective morphism of complex projective varieties $\pi : \X \to \B$, denote by $\mathrm{Rat}(\B,\X)$ the open subset of the Hilbert scheme $\mathrm{Hilb}(\B\times \X)$ whose $\C-$points correspond to graphs in $\B\times \X$ of rational sections of $\pi$ (see the proof of Proposition 1.7 in \cite{Masaki}).

We now recall the definition of cycle space (Chapitre IV of \cite{barlet2014cycles}, see also Chapter I of \cite{kollar1999rational}). Consider the set $\CC_n(\X)$ of $n-$cycles in a projective variety $\X$ of dimension $m,$ that is, all finite sums $C=\sum_i m_iC_i$ of irreducible varieties $C_i$ of dimension $n$ with positive coefficients $m_i$. We give $\CC_n(\X)$ the topology such that the natural map from $\CC_n(\X)$ to the space of positive currents is a closed immersion. 

We can endow $\CC_n(\X)$ with a complex structure such that it is actually the analytic complex space associated with the Chow scheme $\mathrm{Chow}_n(\X)$ of $\X$ which parameterizes algebraic n-cycles of $\X$. We will write  $\CC_n(\X)$ and $\mathrm{Chow}_n(\X)$ indifferently. There is a morphism (called Douady-Barlet morphism in the analytic case, and Hilbert-Chow in the algebraic case) $$\Theta_\X^n: \mathrm{Hilb}_n(\X)_{\mathrm{red}} \to \CC_n(\X)$$ defined as follows (where 'red' means the reduced induced scheme structure). Let $C$ be a closed subscheme and $C_1,\cdots,C_t$ the irreducible components. Denote by $m_i:=l_{\O_{C_i,C}}(\O_{C_i,C})$ the length of $\O_{C_i,C}$, then $\Theta_\X^n(C):=\sum m_i[C_i]$ (see \cite{fulton2012intersection}).

Moreover, if $f :\X \to \mathfrak{Y}$ is a morphism of projective varieties, there is an induced morphism $$f_{*}:\CC_n(\X) \to \CC_n(\mathfrak{Y})$$ 
defined as follows. First let $C\in \CC(\X)$ be an irreducible variety of dimension $n$. If $f(C)$ has dimension strictly less than $n$, then we set $f_{*}(C)$ to be the empty $n-$cycle. Otherwise, the restriction $f|_{C}:C \to f(C)$ is generically finite of degree $m$ and we define $f_{*}(C):=mf(C)$. We then extend  the map $f_{*}$ by linearity to all cycles (see Chapitre X of \cite{barletCyclesII}).

\section{Proof of Theorem \ref{theorem} and Corollary \ref{corollary}}

\begin{proof}[Proof of Theorem \ref{theorem}]
    We first construct an appropriate parameter space using the preliminaries of the previous section.
    Fix a positive integer $k$, Let $\X^{'}_k$ be the normalization of the Zariski closure of the graph 
    $$(id,f^k) :\X_\Lambda \to \X \times \X$$ 
    and denote by $\pi_i^k, i=1,2$ the two projections from $\X^{'}_k$ to $\X$. Set $\L_k:=\sum_{i=1}^2 {(\pi_i^k)}^{*}\L$, it is an ample divisor on $\X^{'}_k$. The two projections induce morphisms at the level of cycle spaces that still denoted by $\pi_i^k$. Let $\CC_{c'_u}(\X, \L)$ be the (possibly reducible) projective variety formed of $b$-cycles $C$ such that $C\cdot \mathrm{c}_1(\L)^b \leq c'_u.$ Consider the subvariety $$S_k:=\pi_1^k(\cap_{i=1}^2 {(\pi_i^k)}^{-1}(\CC_{c'_u}(\X, \L)) )$$ 
    in $\CC_{c'_u}(\X, \L)$ and take the intersection 
    $$S:=\bigcap_k S_k$$ 
    over all $k\in \N$, then $S$ consists of $b$-cycles with stable horizontal part (i.e. Zariski closure of stable points), and whose vertical part (i.e. its projection to $\B$ is a proper subset) is supported over $\B \setminus \Lambda.$

    Since we only want to consider stable $K-$points, we continue to modify our variety $S$. Denote by $q:\B \times \X \to \X$ the second projection and by 
    $$\Theta: \mathrm{Rat}(\B,\X) \to \CC_b(\X)$$ 
    the composition $\Theta:=q_{*} \circ \Theta_{\B\times \X}^b$ restricted on $\mathrm{Rat}(\B,\X)$. Then the preimage $Z:=\Theta^{-1}(S)$ of $S$ by $\Theta$ consists of only stable irreducible horizontal $b-$cycles which are birational to $\B$.
    
    \medskip
    
    Let us prove the first point (1). By the Noetherianity of the Zariski topology, the intersection $S=\cap_k S_k$ is stable for a large but finite intersections, say the first $N$ factors $S_k$. Whence the gap for the canonical height function around zero. Indeed, by Proposition \ref{higehtwelldefined}, we have 
\begin{align*}
    |\hat{h}_{f}(x) - \sigma_{x} \cdot c_1(\L)\cdot c_1(\pi^{*}\M)^{b-1}|<c_u.
\end{align*}
Since the bound is independent of points, we get
\begin{align*}
    f^n(\sigma_x)\cdot c_1(\L)\cdot c_1(\pi^{*}\M)^{b-1}<c_u + d^n\hat{h}_{f}(x),
\end{align*}
for all $n\in \N$.
If $\hat{h}_f(x)<c_u/d^N$, then $f^n(\sigma_x)\cdot c_1(\L)\cdot c_1(\pi^{*}\M)^{b-1}<2c_u$ for all $n=1,\cdots,N$, hence for all $n\in \N$ by Noetherianity, which implies $\hat{h}_f(x)=0$ by definition.

\medskip

We now prove the second point (2). Suppose that the set of stable $K-$points are Zariski dense. Take an irreducible component of maximal dimension of $Z$, by abuse of notation, we still write it $Z$. Denote by $\Gamma(Z)$ the graph of $Z$ in $\X \times Z$ and $\pi_1$, $\pi_2$ the two projections of $\Gamma(Z)$ to $\X$ and $Z$ respectively. 

Let us prove that $\pi_1$ is a birational map. Let $p\in \X_{t_0}, t_0\in \Lambda$ be a repelling periodic point of $f_{t_0}$ in a regular fiber. Up to taking an iteration of $f$, we can suppose it is fixed. By the implicit function theorem, there is a local analytic continuation of $p$ as repelling fixed point: there exists an analytic open subset $U\subset \B$ containing $t_0$ and a local section $\sigma_0 :U \to \X_U$ such that for all $t \in U,$ $\sigma_0(t)$ is a repelling fixed point of $f_t$ in $\X_t$. Let $\sigma_x$ be the Zariski closure of a $K-$point $x\in X$ passing through the point $p.$ There is an open neighborhood $\U$ of $p$ so that, there exists a real number $\mu > 1$ such that, in every fiber $\U_t,$ where $t\in U,$ the induced morphism $f_t$ satisfies 
$$\mathrm{dist}(f_t(x),\sigma_0(t)) > \mu\cdot \mathrm{dist}(x,\sigma_0(t)),$$
for all $x\in \U_t$. Since the $b-$cycles $f^n(\sigma_x)$ are stable for all $n\in \N$, we can extract a subsequence converging to a $b-$cycle $C_0$. It cannot have a vertical component in the regular part $\X_\Lambda$, hence $\sigma_x$ has to coincide with $\sigma_0$. Thus if an irreducible horizontal $b-$cycle in $Z$ passes through a repelling periodic point, it is unique. Since repelling periodic points are Zariski dense (see \cite{dnt,BriendDuval99}), $\pi_{1,\Lambda}$ is generically finite of degree 1, thus it is birational.

Let $\Y\subset \X_\Lambda$ be a Zariski open subset over which $\pi_1$ is an isomorphism. For all $t\in \Lambda,$ $\Psi_t:= \pi_2 \circ \pi_1^{-1}|_{\Y_t}: \Y_t \to Z$ is a birational map. Hence for all $t,t'\in \Lambda $, the endomorphisms $f_t$ and $f_{t'}$ are birationally conjugated by $\Psi_{t}^{-1} \circ \Psi_{t'}$.
\end{proof}
~\\ 

\begin{proof}[Proof of Corollary \ref{corollary}] 
We keep the notation of the previous proof. Since $f$ is not isotrivial, the dimension dim($Z$) of $Z$ (which is irreducible by assumption) is zero or one. Denote by $\ZZ$ the image of $\pi_1:\Gamma(Z)\to \X.$

If dim($Z$)=1, then the generic fiber of $\ZZ$ in $X$ is a singular curve with infinitely many $K$-points, hence 
it is birational to a projective line or to an elliptic curve. There are finitely many such curves and they are preperiodic by $f$. Denote by $E_1$ the union of these curves. Let $C$ be any such curve in $E_1$. There exist positive integers $n,m$ such that $f^n(C)$ is fixed by $f^m$. If $f^n(C)$ is of genus zero, then by the main theorem, $f^m: f^n(C) \to f^n(C)$ is isotrivial. If $f^n(C)$ is of genus one, we can assume it is normal. The same argument as in the proof of the main theorem implies that general fibers of a model $\mathfrak{C}$ of $f^n(C)$ are isomorphic, where by a model we mean a surjective morphism $\pi_C : \mathfrak{C} \to \B$ whose generic fiber is $f^n(C)$. Thus $f^n(C)$ is isotrivial (see e.g. \cite{Sandor}). 

If dim($Z$)= 0, either the generic point of $\ZZ$ is a preperiodic $K-$point in $X,$ or it will be sent to $E_1$ by a iteration of $f$, since it is a stable point. Let $E_0$ be the union of these points which are sent to $E_1$ by a iteration, then set $E=E_0\cup E_1$.
\end{proof}

\bibliographystyle{plain}
\bibliography{mybio}
\end{document}